\numberwithin{equation}{section}
\newtheorem{Thm}{Theorem}
\newtheorem{Lemma}[Thm]{Lemma}
\newtheorem{cor}[Thm]{Corollary}
\newtheorem{definition}[Thm]{Definition}
\newtheorem*{rmk}{Remark}
\newtheorem*{reminder}{Reminder}
\newcommand{\norm}[1]{\left\| #1 \right\|}
\newcommand{\abs}[1]{\left|#1\right|}
\newcommand{\myLtwo}{\mathnormal{L}^2 \left( \mathbbm{1}_{\left[ -2,2 \right]}(x) \left( 4-x^2 \right)^\frac{3}{2}  \mathrm{d}x \right)}
\begin{document}
	\title{A convergent $\frac{1}{N}$ expansion for GUE }
	\author{Offer Kopelevitch\textsuperscript{1}}
	\footnotetext[1]{School of Mathematical
Sciences, Tel Aviv University, Tel Aviv, 69978, Israel. E-mail: offer358@walla.com. Supported in part by the European Research Council start-up grant 639305 (SPECTRUM)}
	\maketitle
	\begin{abstract}
We show that the  asymptotic $1/N$ expansion for the averages of linear statistics of the GUE  is convergent when the test function is an entire function of order two and finite type. This allows to fully recover the mean eigenvalue density function for finite $N$ from the coefficients of the expansion thus providing a resummation procedure. As an intermediate result we compute the bilateral Laplace transform of the GUE reproducing kernel in the half-sum variable, generalizing a formula of Haagerup and Thorbj\o rnsen.
\end{abstract}
	\section{Introduction}
	
	We study the spectral asymptotics of the random matrices from the Gaussian Unitary Ensemble (GUE). By a GUE($N$) matrix we shall mean a random hermitian matrix $X_N $ whose  entries are jointly independent random variables up to the condition $ X_{jk} = \bar{X}_{kj} $ , the diagonal entries are distributed as $ N(0,\frac{1}{N}) $ and for $j<k $ ,  $ \Re(X_{jk}) $ and $ \Im(X_{jk}) $ are independent with distribution $ N(0, \frac{1}{2N}) $. The eigenvalues of this matrix,  $ \lambda_1 \leq \cdots \leq \lambda_N $, form a determinantal point process with kernel \[ K_N (x,y) = e^{ -\frac{N}{4}(x^2 + y^2)}  \frac{\tilde{h}_{N-1}(x) \tilde{h}_N(y) - \tilde{h}_N(x) \tilde{h}_{N-1}(y)}{x-y} ~, \] where $\tilde{h}_k $ are the normalized Hermite polynomials (see Appendix \ref{HerProp}). This means that the correlation functions of the process are expressed as determinants constructed from $K_N$.  We only state the  formula which is important in the sequel: the mean eigenvalue density $p_N(x)$ function of the GUE, defined by
	 \[ \frac{1}{N} \mathbb{E} \sum_{j=1}^{N} \mathbbm{1}_{a<\lambda_j<b} = \int_{a}^{b} p_N(x) \mathrm{d}x~, \]
is given by  the $1 \times 1$ determinant $ p_N(x) =\frac{1}{N}K_N(x,x) $. These and further properties of the GUE are discussed, for example,  in \cite{Book-Intro, Intro}.

	Wigner \cite{WigSemiCircle} proved that for a large family of random matrix ensembles with independent entries (including the GUE)  the eigenvalue distribution tends to the semi-circle law as the size of the matrix tends to infinity, i.e., 
	\[ \frac{1}{N}\sum_{j=1}^{N} \mathbbm{1}_{a<\lambda_j<b} \underset{N\to \infty}{\longrightarrow } \frac{1}{2\pi} \int_{a}^{b} \sqrt{4-x^2}\mathbbm{1}_{(-2,2)}(x) \mathrm{d}x ~, \] 
and the same holds with the expectation on the left-hand side. We are interested in the corrections to the semi-circle for a finite $N$. 

 Ercolani and McLaughlin \cite{EM} proved that for any function $f$ from the space of $C^\infty $ functions with at most polynomial growth (denote this space by $C^\infty_\mathrm{p}$) and for $X_N$ from some class of random matrices with unitary-invariant distribution (including the GUE) there exists a sequence $(\alpha_j(f))_{j\in\mathbb{N}_0}$ of complex numbers such that for any $k\in \mathbb{N}_0$,
	
	\begin{equation} \label{AsymptoticExp}
	 \frac{1}{N}\mathbb{E} \left\{ \mathrm{Tr} \left(f\left(X_N\right)\right) \right\} = \int f(t) p_N(t) \mathrm{d}t = \sum_{j=0}^{k} \frac{\alpha_j(f)}{N^{2j}} + O\left(N^{-2k-2}\right) ~.
	\end{equation}
	
	Haagerup and Thorbj\o rnsen \cite{HT} provided an alternative proof of \eqref{AsymptoticExp} in the case of the GUE using a differential equation for the mean density function of the GUE, $p_N$. (The differential equation, see \eqref{eq:diffeq} below, was proved by G\"otze and Tikhomirov \cite{DiffEq} building on earlier work of Haagerup and Thorbj\o rnsen \cite{large eig}.) Furthermore, they showed that 
	\begin{equation} \label{DistExp}
	\alpha_j(f) = \frac{1}{2\pi} \int_{-2}^{2} \left[T^jf\right](x) \sqrt{4-x^2} \, \mathrm{d}x 
	\end{equation}
	for a certain linear operator $T\colon C^\infty_p \to C^\infty_p  $ (see Section \ref{Operator}). They also found an explicit expression for the error term: 
	\begin{equation}
	\int f(t) p_N(t) \mathrm{d}t = \sum_{j=0}^{k} \frac{\alpha_j(f)}{N^{2j}} + \frac{1}{N^{2k+2}} \int_{-\infty}^{\infty} \left[T^{k+1}f\right](x) p_N(x)\mathrm{d}x ~.
	\end{equation}
	
	\medskip\noindent
	It might seem that the series 
	\begin{equation}\label{eq:series} p_N \sim \sum_{j=0}^\infty \frac{\alpha_j}{N^{2j}} \end{equation}
	cannot be convergent since the support of $ \alpha_j $ (in the sense of distributions) is the interval $ [-2,2] $, whereas the support of $p_N$ is the entire line. \\Another reason to believe that such series cannot be convergent comes from the integral representations for $p_N$. We show one derived from equations (3.28) and (1.3) of \cite{Dis} (see \cite{IN,Sh} for alternative ones):
	\[ p_N(t) = - \frac{N}{2\pi^2} \Im \int_{(\mathbb R + i) \times \mathbb{R}} \mathrm da \, \mathrm  db \, e^{-\frac{N}{2} (a^2 + b^2)} 
	\frac{(t-ib)^N}{(t-a)^N}  \, a \left[ 1 - \frac{1}{(t-a)(t-ib)} \right]~. \]
Computing the large $N$ asymptotics via the saddle-point approximation, one finds two relevant saddle points, one of which
yields the terms corresponding to $\alpha_j$, while the other one yields non-vanishing oscillating corrections, the first of which is of order $\frac{1}{N}$ (see \cite{Dis}). Both arguments seem to indicate that there are non-perturbative corrections to (\ref{eq:series}). 

\medskip
 However, Harer and Zagier \cite{HZ} showed that the moments of the GUE can be written as a finite power series in $\frac{1}{N^2}$, i.e, when $X_N $ is a $ \mathrm{GUE}\left(N\right) $ matrix,
	\begin{equation}\label{eq:HZ}
	\int_{-\infty}^\infty t^n p_N(t) \mathrm{d}t = \sum_{0\leq k \leq \frac{n}{2}} \frac{\epsilon_k(n)}{N^{2k}} ~,
	\end{equation}
	where $\epsilon_0(n) $ are the moments of the semicircle. The moment problem for $p_N(t)$ is determinate,
	therefore (\ref{eq:HZ}) provides a resummation procedure for (\ref{eq:series}); however, this procedure is 
	not very explicit and does not seem suitable for taking scaling limits. 
	
	This raises the question for which functions $f$ the integral  
	$\int f(t) p_N(t) \mathrm{d}t$ can be written as a convergent power series in $\frac{1}{N^2}$. Our main goal is to answer this question, see \thref{mainThm} below.
	
	\medskip
	The paper is organized as follows:  In \thref{FourierTrans} of Section~\ref{Laplace} we generalize a result of Haagerup and Thorbj\o rnsen \cite{large eig}, who computed the bilateral Laplace transform of the density function (i.e., $ \frac{1}{N} \mathbb{E} \left\{ \mathrm{Tr} \left[e^{sX_N}\right] \right\} $). We compute the bilateral Laplace transform of $ K_N(\lambda_+ +\lambda_-, \lambda_+ - \lambda_-) $ with respect to $ \lambda_+ $ (setting $\lambda_-=0$ recovers the result of \cite{large eig}).  The expression has a peculiar hyperbolic symmetry between $\lambda_-$ and the parameter of the Laplace transform, which we believe to be interesting, however in the sequel we only use the original result of \cite{large eig}. 
	
	In Section \ref{Laplace Expansion} we show by an explicit computation that the bilateral Laplace transform of the density function can be expressed as a convergent power series in $\frac{1}{N} $ , i.e.,  
	\begin{equation} \label{LapExpStatement} 
	\int_{-\infty}^\infty e^{st} p_N(t)\,\mathrm{d}t=\sum_{k=0}^{\infty} \frac{c_k(s)}{N^{2k}} ~,
	\end{equation} where $c_0(s) $ is the bilateral Laplace transform of the semi-circle law, and more generally $c_k(s)$ is the bilateral Laplace transform of the distribution $ \alpha_k $. The formula \eqref{LapExpStatement} is valid for all $s\in \mathbb C $ and provides a resummation procedure for \eqref{eq:series}:
	\begin{equation} \label{Fourier resummation}
	 p_N = \left(\sum_{j=0}^\infty \frac{\widehat{\alpha}_j}{N^{2j}}\right)^\vee
	\end{equation}
	(where $\bullet^\wedge$ and $\bullet^\vee$ denote the direct and inverse Fourier transform, which are well defined since $ p_N(t) $ decays as a polynomial times a gaussian as $t \to \infty $ ).
	
	In Section \ref{Operator} we define the operator $T$ from \eqref{DistExp} on the space of entire functions of order two and finite type: \[ \left\{ f \, \text{entire} \,\big| \, \sigma_f = \limsup_{\abs{z} \to \infty } \frac{ \log \abs{f(z)} }{\abs{z}^2} < \infty \right\} ~,  \]  and show that the operator is bounded with respect to appropriate norms. In Section \ref{proof} we use that to show that the asymptotic series (\ref{eq:series}) is a convergent series of functionals on this space. More precisely, we prove
	
	\begin{Thm} \thlabel{mainThm}
		If $f$ is an entire function of order two and finite type $\sigma_f$, then for a $ \mathrm{GUE}(N) $ random matrix $X_N $ with $N > N_0(\sigma_f)$  one has a convergent expansion$\colon$  
		\[ \frac{1}{N} \mathbb{E} \{ \mathrm{Tr}\left( f(X_N)\right) \} = \int f(t) p_N(t) \, \mathrm{d}t =\frac{1}{2\pi}  \sum\limits_{k=0}^{\infty} \frac{1}{N^{2k}} \int_{-2}^{2} \left[T^kf\right] (t) \sqrt{4-t^2}\, \mathrm{d}t~. \]
	\end{Thm}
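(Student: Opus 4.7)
The approach is to start from the exact error formula of Haagerup and Thorbj\o rnsen, namely
\[
\int f(t) p_N(t)\,\mathrm{d}t = \sum_{j=0}^{k} \frac{\alpha_j(f)}{N^{2j}} + R_k, \qquad R_k := \frac{1}{N^{2k+2}} \int_{-\infty}^{\infty} \bigl[T^{k+1}f\bigr](x)\, p_N(x)\,\mathrm{d}x,
\]
and to show that for $f$ of order two and finite type $\sigma_f$, once $N$ exceeds a threshold $N_0(\sigma_f)$, the remainder $R_k$ tends to $0$ as $k \to \infty$. Combined with the identity \eqref{DistExp} for $\alpha_j(f)$, this immediately yields the claimed convergent expansion, so the whole task reduces to a uniform-in-$k$ control of $R_k$.

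The first step is to invoke the boundedness of $T$ to be established in Section~\ref{Operator}: on a Banach space of entire functions of order two and type at most $\sigma$, equipped with a weighted norm $\|\cdot\|_\sigma$ of the form $\|g\|_\sigma = \sup_{z\in\mathbb{C}} |g(z)|\, e^{-\sigma|z|^2}$ (or a suitable $L^2$ analogue), one has $\|T\|_\sigma \le C(\sigma) < \infty$ and $T$ maps the space into itself. Iterating gives $\|T^{k+1}f\|_\sigma \le C(\sigma)^{k+1} \|f\|_\sigma$, which at real arguments translates into the pointwise bound
\[
\bigl|[T^{k+1}f](x)\bigr| \le C(\sigma)^{k+1}\, \|f\|_\sigma\, e^{\sigma x^2}, \qquad x \in \mathbb{R}.
\]

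The second step is to absorb this pointwise growth into the Gaussian decay of $p_N$. Since $p_N(x) = \frac{1}{N} K_N(x,x)$ equals $e^{-Nx^2/2}$ times a polynomial of degree $2(N-1)$ (via the Hermite polynomial representation of $K_N$), the integral $B_N(\sigma) := \int_{-\infty}^{\infty} e^{\sigma x^2} p_N(x)\,\mathrm{d}x$ is finite as soon as $\sigma < N/2$. Choosing $\sigma$ slightly greater than $\sigma_f$ so that $\|f\|_\sigma < \infty$, and requiring $N > 2\sigma$ together with $N^2 > C(\sigma)$, we obtain
\[
|R_k| \le \frac{B_N(\sigma)\, \|f\|_\sigma}{N^2} \biggl(\frac{C(\sigma)}{N^2}\biggr)^{k},
\]
which decays geometrically in $k$; passing to the limit $k\to\infty$ concludes the argument. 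The threshold $N_0(\sigma_f)$ is then determined by the two inequalities $N_0 > 2\sigma$ and $N_0^2 > C(\sigma)$ for some admissible $\sigma > \sigma_f$.

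The main obstacle is the operator estimate $\|T\|_\sigma \le C(\sigma)$ itself. The operator $T$ from \cite{HT} is built from derivatives and evaluations of $f$ at shifted points, and establishing its continuity on a weighted space of entire functions of order two requires a quantitative use of Phragm\'en--Lindel\"of-type bounds stemming from the finite-type hypothesis; this is the technical core of Section~\ref{Operator}. Once that control is in place, the convergence of the $1/N$ expansion reduces to the elementary geometric-series estimate sketched above.
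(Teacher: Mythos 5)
Your overall architecture coincides with the paper's: iterate the exact remainder identity and show that $\frac{1}{N^{2m}}\int [T^{m}f]\,p_N\,\mathrm{d}t \to 0$ by combining (a) boundedness of $T$ in a norm adapted to entire functions of order two and finite type with (b) a bound on the functional $f\mapsto\int f\,p_N\,\mathrm{d}t$ in that same norm. Your step (b) is essentially sound (the paper instead uses the tail bound $\mathbb{P}(\lambda_N>2+r)\le Ne^{-Nr^2/2}$ together with pointwise bounds on the Gegenbauer polynomials, but controlling $\int e^{\sigma x^2}p_N\,\mathrm{d}x$ for $\sigma<N/2$ would serve the same purpose). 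The problem is step (a), which you defer entirely to ``the technical core of Section~\ref{Operator}'' --- and the specific framework you propose for it does not work. On the space with norm $\|g\|_\sigma=\sup_z|g(z)|e^{-\sigma|z|^2}$, differentiation is \emph{not} a bounded operator: Cauchy estimates on a disk of optimal radius $\delta\sim(\sigma|z|)^{-1}$ give $|g'(z)|\lesssim \sigma|z|\,e^{\sigma|z|^2}\|g\|_\sigma$, so each derivative costs a factor $|z|$ (equivalently forces an increase of $\sigma$). Since $T$ involves four derivatives against only one second-order smoothing, each application of $T$ loses a factor of order $|z|^2$, and compounding these losses over $k$ iterations (either by absorbing $|z|^{2k}$ into $e^{(\sigma'-\sigma)|z|^2}$, which costs $(k/(e(\sigma'-\sigma)))^{k}$, or by shrinking increments $\sigma_{j+1}-\sigma_j\sim\epsilon 2^{-j}$) produces a factor growing faster than geometrically in $k$, which destroys the convergence of the series. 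Also, your description of $T$ as ``derivatives and evaluations of $f$ at shifted points'' is inaccurate: $T=(S\,\cdot)'''$ where $Sg$ solves the first-order ODE $(t^2-4)(Sg)'+3t(Sg)=g-\alpha_0(g)$, i.e.\ an integral-type operator composed with three derivatives.

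The paper's way around this is the genuinely new ingredient you are missing: expand $f$ in the Gegenbauer polynomials $f_n=C_n^{(2)}(x/2)$ and use the coefficient norm $\|f\|_{1/2,K}=\sup_n (n/K)^{n/2}|a_n|$, which is finite exactly for entire functions of order two and type controlled by $K$. In this basis $T=D^3HD$ with $D$ costing a factor $n^{1-c}=n^{1/2}$ per application and $H$ (diagonal, entries $((n+2)^2-1)^{-1}$) gaining $n^{-2}$, so at the critical exponent $c=\tfrac12$ the losses cancel exactly ($4\cdot\tfrac12-2=0$) and $T$ is honestly bounded on a \emph{fixed} space, with no compounding loss. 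A secondary gap: you invoke the Haagerup--Thorbj\o rnsen error formula directly for $f$ of order two and finite type, but that identity is established only for $f\in C^\infty_{\mathrm{p}}$ (polynomial growth), which excludes functions growing like $e^{\sigma x^2}$ on $\mathbb{R}$; the paper extends the one-step identity \eqref{OneTremark} from polynomials to this class by density, using precisely the continuity estimate $|\int f\,p_N|\le C_K\|f\|_{1/2,K}$ of its Lemma~\ref{OneTlem}. You would need an analogous approximation argument before your iteration is even legitimate.
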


\begin{rmk} The condition in \thref{mainThm} is sharp in the following sence. For entire functions of order two and maximal type the expansion might diverge, and in fact $\frac{1}{N} \mathbb{E} \{ \mathrm{Tr}\left( f(X_N)\right) \}$ may be infinite for all $N$. This follows from the estimate
\begin{equation*} \mathbb{P} \left( \lambda_N \geq R \right)  \geq \mathbb{P} \left( X_{11} \geq R \right)  = \int_R^\infty e^{-\frac{Nx^2}{2}} \frac{\sqrt{N}}{\sqrt{2\pi}} \, \mathrm{d}x  \geq \frac{\sqrt{N}}{\sqrt{2\pi}} e^{-\frac{N(R+1)^2}{2}} ~. 
\end{equation*}
 \end{rmk}
 
 Equation \eqref{Fourier resummation} and more generally \thref{mainThm} show that divergent $\frac{1}{N}$ expansions can be sometimes resummed, at least, in the case of the mean eigenvalue density of the GUE. It would be
 interesting to extend this to other spectral characteristics and to other random matrix ensembles. The question could be asked for the unitary invariant matrix 
 ensembles, see \cite{APS} and references therein, and for $\beta$-ensembles, see \cite{BG} and references therein, as well
 as for many other ensembles that admit formal expansions.
 We also hope that convergent $\frac{1}{N}$ expansions could be a useful tool for studying various $N \to \infty$ limits, in particular, local eigenvalue statistics.

\paragraph{Acknowledgement} I am grateful to my supervisor, Sasha Sodin, for his continuous help and advice. I am also would like to thank Yan Fyodorov for helpful discussions and for a suggestion leading to the current version of \thref{FourierTrans}. I would like to thank Andrei Iacob for copy editing the paper.
	\section{Bilateral Laplace transform of the GUE} 
	
	The aim of this section is to calculate the bilateral Laplace transform of the density function of the GUE, $ p_N(\lambda) = \frac{1}{N} K_N (\lambda,\lambda) $, and to deduce from it a differential equation for $p_N$. To do so, we will calculate the bilateral Laplace transform of 
	\[  K_N(\lambda_+ + \lambda_- , \lambda_+ -\lambda_-) \] with respect to the variable $\lambda_+ $ . This will give us beside the differential equation for $p_N(\lambda) $, a symmetry between the parameter of the Laplace transform $s$ and $N\lambda_- $, which can be thought as the distance between eigenvalues normalized such that the mean distance between consecutive eigenvalues is of order one. 
	
	\subsection{Bilateral Laplace transform of the off-diagonal kernel} \label{Laplace}
	
	\begin{Thm} \thlabel{FourierTrans}
		The bilateral Laplace transform of the $\mathrm{GUE}$ kernel $ K_N\left( \lambda_+ + \lambda_- , \lambda_+ - \lambda_- \right) $ with respect to the variable $ \lambda_+ $ is given by 
		\[
		\int_{-\infty}^{\infty} e^{s\lambda_+} K_N(\lambda_+ +\lambda_-,\lambda_+ - \lambda_-) \mathrm{d}\lambda_+ =  Ne^{-\frac{N}{2}\lambda_-^2 +\frac{s^2}{2N} } {}_1F_1\left(1-N ; 2 | N\lambda_-^2 - \frac{s^2}{N} \right) 
		\]
	for any $s \in \mathbb C$. \end{Thm}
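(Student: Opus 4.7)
The approach is to expand $K_N$ via the orthonormal-basis form
$K_N(x,y) = e^{-\frac{N}{4}(x^2+y^2)}\sum_{k=0}^{N-1}\tilde h_k(x)\tilde h_k(y)$
and evaluate each term as a Gaussian integral. After substituting $x = \lambda_+ + \lambda_-$, $y = \lambda_+ - \lambda_-$ (so $x^2 + y^2 = 2\lambda_+^2 + 2\lambda_-^2$), completing the square $s\lambda_+ - \frac{N}{2}\lambda_+^2 = -\frac{N}{2}(\lambda_+ - s/N)^2 + \frac{s^2}{2N}$, and shifting the contour from $\mathbb{R}$ to $\mathbb{R} + s/N$ (justified for any complex $s$ by Cauchy's theorem and the Gaussian decay of the entire integrand in $\lambda_+$), the claim reduces to
\[ \sum_{k=0}^{N-1}\int_{-\infty}^{\infty}e^{-\frac{N}{2}\lambda^2}\tilde h_k(\lambda+u)\tilde h_k(\lambda+v)\,d\lambda = N\,{}_1F_1\bigl(1-N;\,2\,\big|\,-Nuv\bigr), \]
where $u = s/N+\lambda_-$ and $v = s/N-\lambda_-$, so that $-Nuv = N\lambda_-^2 - s^2/N$ and the Gaussian prefactors combine into exactly $e^{-\frac{N}{2}\lambda_-^2 + \frac{s^2}{2N}}$.

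The core identity is the classical formula
\[ \int_{-\infty}^{\infty}H_k(x+y)H_k(x+z)\,e^{-x^2}\,dx = \sqrt{\pi}\,2^k k!\,L_k(-2yz), \]
which I would derive by multiplying both sides by $w^k/(2^k k!)$ and summing in $k$: Mehler's bilinear formula turns the left side into the Gaussian integral $\int e^{-x^2}(1-w^2)^{-1/2}\exp\!\bigl(\tfrac{2(x+y)(x+z)w-((x+y)^2+(x+z)^2)w^2}{1-w^2}\bigr)\,dx$, which evaluates (by completion of the square in $x$) to $\frac{\sqrt{\pi}}{1-w}\exp\!\bigl(\tfrac{2yzw}{1-w}\bigr)$, matching $\sqrt{\pi}$ times the Laguerre generating function $\sum_k L_k(\alpha)w^k=(1-w)^{-1}\exp(-\alpha w/(1-w))$ at $\alpha=-2yz$. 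The rescaling $\lambda = \sqrt{2/N}\,x$ converts $\tilde h_k$ to $H_k$ with normalization $c_k^2 = \sqrt{N/(2\pi)}/(2^k k!)$, and these factors cancel cleanly with the Jacobian $\sqrt{2/N}$ and with $\sqrt{\pi}\,2^k k!$, so each summand on the left of the reduced identity equals $L_k(-Nuv)$.

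To finish, I would invoke the telescoping relation $\sum_{k=0}^{N-1}L_k(x) = L_{N-1}^{(1)}(x)$ (a special case of $L_n^{(\alpha)} = \sum_{k=0}^n L_k^{(\alpha-1)}$) to collapse the sum to a single Laguerre polynomial, and then the classical conversion $L_{N-1}^{(1)}(x) = N\,{}_1F_1(1-N;\,2\,|\,x)$ to obtain the stated closed form. The calculation is essentially algebraic and I do not anticipate a conceptual obstacle — only the bookkeeping required to keep normalizations consistent between the physical $\tilde h_k$ (orthonormal with weight $e^{-N\lambda^2/2}$) and the classical $H_k$ (orthogonal with weight $e^{-x^2}$). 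The hyperbolic symmetry $N\lambda_-^2 \leftrightarrow -s^2/N$ visible in the answer is automatic, since $u$ and $v$ enter only through the symmetric combination $uv = s^2/N^2 - \lambda_-^2$.
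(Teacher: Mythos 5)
Your argument is correct, and every step checks out: the mode-sum (Christoffel--Darboux summed) form of the kernel, the completion of the square with the contour shift to $\mathbb{R}+s/N$ (legitimate by Gaussian decay of the entire integrand), the normalization bookkeeping reducing each mode to $L_k(-Nuv)$, the telescoping $\sum_{k=0}^{N-1}L_k = L_{N-1}^{(1)}$, and the conversion $L_{N-1}^{(1)}(x)=N\,{}_1F_1(1-N;2\,|\,x)$. However, your route is genuinely different from the paper's. The paper keeps the kernel in its difference-quotient form $\bigl(\tilde h_{N-1}(x)\tilde h_N(y)-\tilde h_N(x)\tilde h_{N-1}(y)\bigr)/(x-y)$, works with real $s$ and analytically continues at the end, and evaluates the cross-integrals $I^{N-1,N}_{a,b}=\int e^{-Nx^2/2}\tilde h_{N-1}(x+a)\tilde h_N(x+b)\,\mathrm{d}x$ by brute force using the translation formula $h_n(x+a)=\sum_k\binom{n}{k}N^ka^kh_{n-k}(x)$ together with orthogonality; the antisymmetrization $I^{N-1,N}_{a,b}-I^{N-1,N}_{b,a}$ then produces a factor $(b-a)=2\lambda_-$ that cancels the denominator, and the surviving sum is read off as $N\,{}_1F_1$. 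Your version trades that explicit cancellation for two classical identities (the Mehler/Hermite--Laguerre integral and the Laguerre telescoping sum); it is arguably more conceptual and makes the symmetry in $uv=s^2/N^2-\lambda_-^2$ transparent from the start, at the cost of importing more machinery, whereas the paper's computation is self-contained modulo the translation formula proved in its appendix. Both are complete proofs of the stated identity.
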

	
	\begin{reminder}
		The confluent hypergeometric function is defined by 
		\begin{equation}\label{eq:confl}
		{}_1F_1(a ; c | x) = \sum_{k=0}^{\infty} \frac{a (a+1) \cdots (a+k-1)}{ c (c+1) \cdots (c+k-1) \cdot k!} x^k ~,
		\end{equation}
		is an entire function of $x$,  and satisfies the differential equation 
		\begin{equation}
		xy''+(c-x)y' -ay =0 ~.
		\end{equation}
	\end{reminder}
	
	\begin{rmk}
		\thref{FourierTrans} is a generalization of a result by Haagerup and Thorbj\o rnsen \textup{\cite{large eig}}, who showed that for $\lambda_-=0$ 
	
		\begin{equation} \label{LaplaceExpression}
		\int_{-\infty}^{\infty} e^{s\lambda} K_N(\lambda,\lambda) \mathrm{d}\lambda = N e^{ \frac{s^2}{2N} } {}_1F_1\left(1-N ; 2\, | \, - \frac{s^2}{N} \right)
		\end{equation}
		
	\end{rmk}
	\begin{proof}
		
		For simplicity of computation we will calculate the bilateral Laplace transform of the kernel for real $s$ and infer the theorem by analytic continuation. \\
		Recall that \[ K_N(\lambda_1,\lambda_2) = e^{ -\frac{N}{4} \left(\lambda_1^2 + \lambda_2^2 \right)} \frac {\tilde{h}_{N-1}(\lambda_1)\tilde {h}_{N}(\lambda_2) - \tilde{h}_{N}(\lambda_1)\tilde{h}_{N-1}(\lambda_2) }{\lambda_1 - \lambda_2} ~, \]  where $ \tilde{h}_k (x) = \left[k! N^k \sqrt{\frac{2 \pi}{N}}\right]^{-\nicefrac{1}{2} } h_k (x) $ are the normalized Hermite polynomials and $ h_k (x) = (-1)^k e^{N\frac{x^2}{2}} \frac{\mathrm{d}^k}{\mathrm{d}x^k} \left( e^{-N\frac{x^2}{2}} \right) $ are the Hermite polynomials (we review their properties in Appendix \ref{HerProp}). Then
		\[\begin{split}
		 &e^{s\lambda_+}K_N(\lambda_+ +\lambda_-,\lambda_+ -\lambda_-) \\ &=   e^{s\lambda_+ -\frac{N}{2}(\lambda_+^2+\lambda_-^2) } \,\, \frac {\tilde{h}_{N-1}(\lambda_+ +\lambda_-)\tilde {h}_{N}(\lambda_+ - \lambda_-) - \tilde{h}_{N}(\lambda_+ +\lambda_-)\tilde{h}_{N-1}(\lambda_+ - \lambda_-) }{2\lambda_-}  \\
		&= \frac{e^{-\frac{N}{2}\lambda_-^2 +\frac{s^2}{2N} -\frac{N}{2} (\lambda_+ - \frac{s}{N})^2 } }{2\lambda_-}  \\
		&\qquad\qquad\qquad \cdot \left( \tilde{h}_{N-1}(\lambda_+  + \lambda_-) \tilde {h}_{N}(\lambda_+ -\lambda_-) - \tilde{h}_{N}(\lambda_+ + \lambda_-) \tilde{h}_{N-1}(\lambda_+ - \lambda_-)  \right)
		\end{split}\]
		Hence \[ \int_{-\infty}^{\infty} e^{s\lambda_+} K_N(\lambda_+ +\lambda_-,\lambda_+ -\lambda_-) \, \mathrm{d}\lambda_+  = \frac{e^{-\frac{N}{2}\lambda_-^2 +\frac{s^2}{2N} } }{2\lambda_-} \left(I^{N-1,N}_{\lambda_- +\frac{s}{N} , -\lambda_- +\frac{s}{N}} - I^{N-1,N}_{-\lambda_- +\frac{s}{N} , \lambda_- +\frac{s}{N}}\right) ~, \]
		where we defined \[ I^{k,l}_{a,b}= \int_{-\infty}^{\infty} e^{-\frac{Nx^2}{2}} \tilde{h}_k(x+a) \tilde{h}_l(x+b) \mathrm{d} x~.\] 
		Now we calculate $ I^{N-1,N}_{a,b} $  using the translation formula (see \thref{TransFor}) \[ h_n(x+a) = \sum_{k=0}^{n} \binom{n}{k} N^k a^k h_{n-k}(x) \]  for the Hermite polynomials:
		\begin{align*}
		I^{N-1,N}_{a,b} &= \int_{-\infty}^{\infty} e^{-\frac{Nx^2}{2}} \tilde{h}_{N-1}(x+a) \tilde{h}_N(x+b) \mathrm{d} x \\
		&= \frac{1}{ \sqrt{\frac{2\pi}{N}} (N-1)! N^N} \\
		&\qquad\quad\int_{-\infty}^{\infty} e^{-\frac{Nx^2}{2}} \left( \sum_{k=0}^{N-1} \frac{N^k a^k (N-1)!}{(N-1-k)! k!}h_{N-1-k}(x) \right) \left( \sum_{l=0}^N \frac{N^l b^l N!}{(N-l)! l!}h_{N-l}(x) \right) \mathrm{d}x \\
		&= \frac{1}{ \sqrt{\frac{2\pi }{N}} N^N} \sum_{k=0}^{N-1} \frac{N^k N^{k+1} a^k b^{k+1} N!} {\left( (N-k-1)! \right)^2 k! (k+1)!} (N-k-1)! N^{N-k-1} \sqrt{\frac{2\pi}{N}} \\
		&= \sum_{k=0}^{N-1} \frac{N^k a^k b^{k+1} N!}{(N-k-1)! k! (k+1)!}  ~.
		\end{align*}	
		Hence, \[I^{N-1,N}_{a,b}-I^{N-1,N}_{b,a} = \sum_{k=0}^{N-1} \frac{N^k a^k b^k (b-a) N!}{(N-k-1)! k! (k+1)!}\] and 
		\begin{align*}
		&\int_{-\infty}^{\infty} e^{s\lambda_+}K_N(\lambda_+ + \lambda_-,\lambda_+ - \lambda_-)\mathrm{d}\lambda_+ \\ &=
		\frac{e^{-\frac{N}{2}\lambda_-^2 +\frac{s^2}{2N} } }{2\lambda_-} \left(I_{\lambda_- +\frac{s}{N} , -\lambda_- +\frac{s}{N}} - I_{-\lambda_- +\frac{s}{N} , \lambda_- +\frac{s}{N}}\right) \\
		&= \frac{e^{-\frac{N}{2}\lambda_-^2 +\frac{s^2}{2N} } }{2\lambda_-} \sum_{k=0}^{N-1} \frac{ 2\lambda_- N^k \left(\frac{s^2}{N^2}-\lambda_-^2 \right)^k  N!}{(N-k-1)! k! (k+1)!} \\
		&= e^{-\frac{N}{2}\lambda_-^2 +\frac{s^2}{2N} } \sum_{k=0}^{N-1} \frac{ \left( \frac{s^2}{N}-N\lambda_-^2\right)^k N!}{(N-k-1)! k! (k+1)!}  ~.
		\end{align*}
		
		Recalling \eqref{eq:confl}, we identify the sum as $N\cdot {}_1F_1\left(1-N ; 2 | N\lambda_-^2 -\frac{s^2}{N} \right)$ and conclude that 
		\[
		\int_{-\infty}^{\infty} e^{s\lambda_+} K_N(\lambda_+ + \lambda_-,\lambda_+ -\lambda_-) \mathrm{d}\lambda_+ =  Ne^{-\frac{N}{2}\lambda_-^2 +\frac{s^2}{2N} } {}_1F_1\left(1-N ; 2 | N\lambda_-^2 -\frac{s^2}{N} \right) ~.
		\]
		
	\end{proof}

	\subsection{Differential equation for $p_N(\lambda)$ }
	
	As a corollary of \thref{FourierTrans} we have
	\begin{cor} [ {\cite[Lemma 2.1]{DiffEq}}]
		The mean eigenvalue density function of the $ \mathrm{GUE} $ satisfies the equation
		\begin{equation}\label{eq:diffeq}
		\frac{1}{N^2} p_N''' (\lambda) + (4-\lambda^2) p_N'(\lambda) +\lambda p_N(\lambda) =0 ~.
		\end{equation}
	\end{cor}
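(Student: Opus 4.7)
My plan is to obtain the ODE by pushing the confluent hypergeometric equation satisfied by $\,{}_1F_1(1-N;2\,|\,\cdot)$ through the Laplace transform identity of \thref{FourierTrans}, specialized to $\lambda_-=0$. In other words, setting $\lambda_-=0$ one obtains the Haagerup--Thorbj\o rnsen formula
\[
\widehat{p}_N(s) := \int_{-\infty}^{\infty} e^{s\lambda}\, p_N(\lambda)\,\mathrm{d}\lambda = e^{s^2/(2N)}\, y(-s^2/N)\,,\qquad y(u) := {}_1F_1(1-N\,;2\,|\,u)\,,
\]
and the differential equation \eqref{eq:diffeq} should correspond, via the standard Laplace-side dictionary $s\leftrightarrow -\partial_\lambda$ and $\partial_s\leftrightarrow \lambda$ (which applies because $p_N$ decays like a gaussian, so $\widehat{p}_N$ is entire and the transform is injective on this class), to a suitable ODE for $\widehat{p}_N$ in the variable $s$.

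Concretely, first I would translate the hypergeometric ODE $u y''(u)+(2-u)y'(u)+(N-1)y(u)=0$ evaluated at $u=-s^2/N$ into an identity between the functions $y, y', y''$ and the conjugated derivatives $\partial_s\widehat{p}_N$, $\partial_s^2\widehat{p}_N$. A direct computation shows
\[
\partial_s \widehat{p}_N(s) = \tfrac{s}{N}\,e^{s^2/(2N)}\bigl[y(-s^2/N) - 2y'(-s^2/N)\bigr]\,,
\]
which lets one express $y'$ in terms of $\widehat{p}_N$ and $\partial_s\widehat{p}_N$; substituting this and the ODE-expression for $y''$ into the analogous expansion of $\partial_s^2\widehat{p}_N$ I expect the $y'$-contribution to cancel, leaving an ODE of the form
\[
s\,\partial_s^2\widehat{p}_N + 3\,\partial_s\widehat{p}_N - \Bigl(4s + \tfrac{s^3}{N^2}\Bigr)\widehat{p}_N = 0\,.
\]

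Finally I would invert the Laplace transform term-by-term using the dictionary above: $s\widehat{f}$ corresponds to $-f'$, $s^3\widehat{f}$ to $-f'''$, and $\partial_s^k\widehat{f}$ to multiplication by $\lambda^k$; in particular $s\,\partial_s^2\widehat{p}_N$ is the Laplace transform of $-\partial_\lambda(\lambda^2 p_N)=-2\lambda p_N-\lambda^2 p_N'$. Collecting terms and dividing by $N^2$ yields exactly
\[
\tfrac{1}{N^2}p_N'''(\lambda) + (4-\lambda^2)p_N'(\lambda) + \lambda\, p_N(\lambda)=0\,.
\]

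The main obstacle is really a bookkeeping one: the cancellation of the $y'$-terms in the second-derivative computation is the single nontrivial algebraic step, and one must keep careful track of signs when passing between the variables $s$, $w=s^2/N$, and $u=-s^2/N$. Once that cancellation goes through, the translation to $\lambda$ is automatic and the uniqueness of the bilateral Laplace inversion (justified by the gaussian decay of $p_N$) closes the argument.
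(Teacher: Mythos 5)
Your proposal is correct and follows essentially the same route as the paper: both start from the $\lambda_-=0$ Laplace transform formula, push the confluent hypergeometric ODE through the substitution $u=-s^2/N$ together with the Gaussian prefactor to obtain exactly the intermediate equation $s\,\widehat{p}_N''+3\,\widehat{p}_N'-(\frac{s^3}{N^2}+4s)\widehat{p}_N=0$, and then invert the bilateral Laplace transform term by term using the same dictionary. The only difference is organizational — the paper first records the general ODE satisfied by $e^{x^2/2}\,{}_1F_1(a;c\,|\,{-x^2})$ and then rescales $x=s/\sqrt{N}$, whereas you carry out the chain-rule bookkeeping directly in $s$ — and your claimed cancellation of the $y'$-terms does go through.
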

	
	\begin{proof}

	The confluent hypergeometric function $ {}_1F_1\left(a;c|x\right)$ satisfies the differential equation $ xf^{''} + (c-x) f^{'} -af =0 $ . Therefore $ g(x) = e^{\frac{x^2}{2}} {}_1F_1\left(a;c|-x^2\right) $ satisfies  
	\[xg''(x) + (2c-1) g' (x) -(x^3+2cx-4ax) g(x) =0 ~.\] We substitute $ a=1-N $ and $ c=2$ to conclude that the bilateral Laplace transform of $p_N(\lambda) $ , $ \hat{p}_N(x) = \int_{-\infty}^{\infty} e^{x\lambda} p_N (\lambda) \mathrm{d}\lambda = g(\frac{x}{\sqrt{N}}) $, satisfies 
	\begin{equation}
	 x\hat{p}_N''(x)  + 3 \hat{p}_N'(x) - (\frac{1}{N^2}x^3+4x) \hat{p}_N(x) = 0 ~.
	\end{equation}
Taking the inverse bilateral Laplace transform (defined since $p_N(t) $ decays as a polynomial times a gaussian as $t \to \pm \infty $) we conclude that
	\begin{equation}\label{DifEq}\begin{split} 
	0 &= -\lambda^2 p_N' (\lambda) -2\lambda p_N(\lambda) +3 \lambda p_N(\lambda) +\frac{1}{N^2} p_N'''(\lambda) + 4p'_N(\lambda)   \\
	&= \frac{1}{N^2} p_N''' (\lambda) + (4-\lambda^2) p_N'(\lambda) +\lambda p_N(\lambda) ~.
	\end{split}\end{equation}
	
	\end{proof}
	
	\begin{rmk}
	Using \thref{FourierTrans}, one can obtain a system of two partial differential equations for $ K_N (\lambda_1 ,  \lambda_2) $: one from the differential equation of the hypergeometric function, and one from the hyperbolic  symmetry between $\lambda_-$ and $s$.
	\end{rmk}
	
	\subsection{$\frac{1}{N}$ expansion of the bilateral Laplace transform} \label{Laplace Expansion}
	
	Let us prove \eqref{LapExpStatement}. 
	By \eqref{LaplaceExpression}, we have
	
	\begin{equation}\label{eq:ltr}
	\int e^{st} p_N(t) \mathrm{d}t =  e^{\left(\frac{s^2}{2N}\right)}\sum_{k=0}^\infty \frac{\left(1-\frac{1}{N}\right)\left(1-\frac{2}{N}\right) \dots \left(1-\frac{k}{N}\right) }{k!(k+1)!} s^{2k} 
	\end{equation}
	
	Recalling the definition of the Stirling numbers,
	\[ x(x-1)\cdots (x-n+1) = \sum_{k=0}^{n} (-1)^{n-k} \left[ n \atop k \right] x^k ~, \]
	we have
	\begin{align*}
	(\ref{eq:ltr})&= e^{\left(\frac{s^2}{2N}\right)}\sum_{k=0}^\infty \frac{ \sum\limits_{l=0}^{k+1} (-1)^{k+l+1} \left[k+1 \atop l \right] N^{l-k-1}}{k!(k+1)!} s^{2k} 
	\\
	&= e^{\left(\frac{s^2}{2N}\right)}\sum_{k=0}^\infty \frac{ \sum\limits_{l=0}^{k+1} (-1)^{l}\left[k+1 \atop k+1-l\right] N^{-l} }{k! (k+1)!} s^{2k} ~.
	\end{align*}
	
	This series converges absolutely uniformly since the Stirling number $ \left[k+1 \atop k+1-l\right]$ counts the number of permutations of $k+1$ elements with $ k+1-l$ disjoint cycles and thus satisfy $ \left[k+1 \atop k+1-l\right] \leq (k+1)! $ . Hence we may change the order of summation to get, 	
	\[
	\int e^{st} p_N(t) \mathrm{d}t = e^{\left(\frac{s^2}{2N}\right)} \sum_{l=0}^\infty (-1)^{l} \left(\sum_{k=0}^\infty  \frac{ \left[ k+1 \atop k+1-l \right] s^{2k}}{k!(k+1)!}  \right) N^{-l}~.
	\]	
	Both factors are analytic in $ \bar{\mathbb{C}} \backslash 0 $, hence the product is analytic at infinity and the expansion can be calculated by multiplying the expansions of the two factors. 
	\begin{rmk}
		It can be shown that $e^{ \frac{s^2}{2N} } {}_1F_1\left(1-N ; 2 | -\frac{s^2}{N} \right)$ is an even function of $N$ and hence the series contains only even powers of $ \frac{1}{N} $ .  
	\end{rmk}
	 
	\section{The operator $T$}
	\label{Operator}
	
	\subsection{Definition of the operator} \label{OneTPoly}
	
	Our goal is to construct an operator $T$ acting on the space of polynomials such that 
	\begin{equation} \label{OneTremark}
	\int_{- \infty}^{\infty} g(t)p_N(t) \mathrm{d} t  =  \frac{1}{2\pi } \int_{-2}^{2} g(s) \sqrt{4-s^2} \mathrm{d}s + \frac{1}{N^2} \int_{\mathbb{R}} [Tg](t) p_N (t) \mathrm{d}t ~.
	\end{equation}
	holds for all $g$ in a space of functions containing the polynomials. By the differential equation \ref{eq:diffeq} we have 
	\begin{equation*}
	0 = \int_{\mathbb{R}} f(t) \left[N^{-2}p_N'''(t) + (4-t^2) p_N'(t) +tp_N(t) \right]\, \mathrm{d}t 
	\end{equation*}
	and therefore, by integration by parts ,
	\begin{equation*}
	\int_{\mathbb{R}} \left[(t^2-4) f'(t) + 3t f(t) \right] p_N(t) \mathrm{d}t = \frac{1}{N^2} \int_{\mathbb{R}} f'''(t) p_N(t) \mathrm{d}t
	\end{equation*}
	Guided by this relation, we will investigate the action of the linear operators $ S,T $ on the space of polynomials, where $ Sg $ is the unique solution to the equation 
	
	\begin{equation} \label{S defenition}
	g(t)=\frac{1}{2\pi} \int_{-2}^{2}g(s)\sqrt{4-s^2}ds + (t^2-4) \left[Sg\right]'(t) + 3t \left[Sg\right](t) 
	\end{equation}
	and $ Tg = (Sg)''' $. Thus we obtain \ref{OneTremark}, (cf. \cite[Theorem 3.5]{HT} ).
	
	\subsection{The action on the space of polynomials} 
	
	Our goal in this part is to calculate the action of the operators $S$ and $T$ and to give bounds on their operator norms.\\	Since $S$ applied to a constant gives zero, $Sg$ is in fact dependent only on $g'$, so if we write $f=Sg$ we get
	\[
	g'\left(t\right) = \left(t^{2}-4\right) f''\left(t\right) +
	5tf'\left(t\right) + 3f\left(t\right) ~.
	\]
	
	We will look for formal eigenfunctions of this operator, i.e.\ polynomials for which $ Sg = \lambda g'$. Denoting $ g'=f$ the equation for $f$ is
	\begin{equation}
	\left(t^{2}-4\right)\partial_{tt} f+5t\partial_{t}f= \left(\frac{1}{\lambda} -3 \right) f
	\end{equation}
and its $C^{\infty }\left( \mathbb{R} \right) $ solutions are the Gegenbauer polynomials  $ f_{n} \left( t \right) = C_{n}^{\left( 2 \right)}(\frac{t}{2})  $ (see Appendix~\ref{a:G}), with $\lambda_n = (n+2)^2-1$ for $n=0,1,2,\dots $ . Those polynomials satisfy $ f_{n+1}' - f_{n-1}' = (n+2) f_{n} $, so the differentiation operator is represented in the basis $\{f_n\}_n$ by the matrix	
	\[
	D= 
	\begin{pmatrix}
		0      & 2 & 0 & 2 &  0 & 2 & \dots  \\
		0      & 0 & 3 & 0 & 3 &  0 & \dots  \\
		0      & 0 & 0 & 4 & 0 &  4  & \dots\\
		0       & 0 & 0 & 0 & 5 & 0 &\\ 
		\vdots &   &   &   &   & &\ddots
	\end{pmatrix}
	\]
	\begin{cor} In the basis $\{f_n\}_n$, $ S=H D$ and $T=D^3 HD$, 
	where $D$ is as above and $H_{mn}=\delta_{mn}\frac{1}{(n+2)^2-1} $ .\end{cor}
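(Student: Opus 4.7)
My plan is to differentiate the defining equation of $S$, at which point everything reduces to recognizing a diagonal operator in the Gegenbauer basis. Let $L$ denote the second-order differential operator $Lf = (t^{2}-4)f'' + 5tf' + 3f$. Differentiating both sides of
\[
g(t) = \frac{1}{2\pi}\int_{-2}^{2}g(s)\sqrt{4-s^{2}}\,\mathrm{d}s + (t^{2}-4)[Sg]'(t) + 3t[Sg](t)
\]
with respect to $t$ eliminates the constant term and produces
\[
g'(t) = (t^{2}-4)[Sg]''(t) + 5t[Sg]'(t) + 3[Sg](t) = L[Sg](t),
\]
so $L\circ S = \partial_{t}$ as operators on polynomials.

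Next I would use what has already been established about the Gegenbauer basis. The identity $(t^{2}-4)f_{n}'' + 5tf_{n}' = ((n+2)^{2}-4)f_{n}$, which is the eigenvalue equation recorded in the previous subsection (the value $\lambda_n = (n+2)^2-1$ being that of $S$ on $f_n'$), immediately yields $Lf_{n} = ((n+2)^{2}-1)f_{n}$. Hence in the basis $\{f_{n}\}$ the operator $L$ is represented by the diagonal matrix with entries $(n+2)^{2}-1$; since these are all non-zero (they are $\geq 3$), $L$ is a bijection of the space of polynomials onto itself, and its inverse is represented precisely by $H$. Combining $L\circ S = \partial_{t}$ with the matrix representation $D$ of $\partial_{t}$ in $\{f_n\}$ yields $S = L^{-1}\partial_{t} = HD$, and the definition $Tg = (Sg)'''$ then gives $T = D^{3}\circ S = D^{3}HD$ at once.

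There is no real obstacle to this argument — once the defining equation is differentiated the claim is pure bookkeeping in a diagonal basis. The only minor point I would want to verify is that differentiating the equation does not lose information, i.e.\ that $Sg$ is still uniquely recovered from $g'$; but this is exactly the content of the earlier observation that $S$ kills constants, and the integral on the right-hand side of the defining equation simply fixes the additive constant that would otherwise be lost upon differentiation. One can alternatively phrase the whole proof as an eigenfunction calculation: since $f_n' = \sum_k D_{kn} f_k$, applying $L^{-1}$ termwise gives $S f_n = \sum_k D_{kn}\,((k+2)^2-1)^{-1} f_k = (HDe_n) \cdot \{f_k\}$, which is the entrywise statement of $S=HD$.
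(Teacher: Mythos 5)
Your proof is correct and follows essentially the same route as the paper: differentiate the defining equation of $S$ to obtain $g' = L[Sg]$ with $L = (t^2-4)\partial_{tt} + 5t\partial_t + 3$, recognize the Gegenbauer polynomials $f_n$ as eigenfunctions of $L$ with eigenvalue $(n+2)^2-1$ (so that $L^{-1}$ is the diagonal matrix $H$), and compose with the differentiation matrix $D$ to get $S = HD$ and $T = D^3HD$. Your closing remark about the integral term fixing the additive constant lost under differentiation is exactly the point the paper leaves implicit in asserting that $Sg$ is the unique solution of \eqref{S defenition}.
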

	
	\subsection{Bounds on the operator norm}
	Entire functions of order two and finite type can be written as convergent sums of Gegenbauer
	polynomials. This is the content of the next lemma
	
	\begin{Lemma}
		If $f$ is entire function of order two and finite type $\sigma_f$, then $ f = \sum_{n=0}^{\infty} a_n f_n $ , where the $ \mathrm{RHS} $ converges locally uniformly and 
		\begin{equation} \label{coefficients bound}  \sup \left\{ \abs{a_n} \left(\frac{n}{K} \right)^{\frac{n}{2}} \right\} < \infty ~, \quad K > 8 e \sigma_f~.
		\end{equation}
	\end{Lemma}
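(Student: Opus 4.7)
The plan is to define the coefficients $a_n$ through the standard Gegenbauer orthogonality on $[-2,2]$, rewrite the resulting integral in terms of $f^{(n)}$ via the Rodrigues formula, bound $|f^{(n)}|$ using the order-and-type hypothesis combined with Cauchy's formula, and finally identify $\sum a_n f_n$ with $f$ on all of $\mathbb C$ via $L^2$-completeness and analytic continuation.

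Concretely, the polynomials $f_n(t) = C_n^{(2)}(t/2)$ are orthogonal on $[-2,2]$ with weight $(4-t^2)^{3/2}\,dt$ and squared norms $h_n = 2\pi(n+1)(n+3)$, so I set
\[ a_n = \frac{1}{h_n}\int_{-2}^{2} f(t)\,f_n(t)\,(4-t^2)^{3/2}\,dt, \]
which is well-defined since $f$ is continuous on $[-2,2]$. The Rodrigues formula
\[ (1-x^2)^{3/2} C_n^{(2)}(x) = \frac{(-2)^n (n+1)!(n+3)!}{n!(2n+3)!}\frac{d^n}{dx^n}\bigl[(1-x^2)^{n+3/2}\bigr], \]
the substitution $t = 2x$, and $n$ integrations by parts (all boundary terms vanish because $(1-x^2)^{n+3/2}$ has a zero of order $>n-1$ at $x=\pm 1$) convert this into
\[ a_n = \frac{8\cdot 4^n (n+2)!}{\pi(2n+3)!}\int_{-1}^{1} f^{(n)}(2x)(1-x^2)^{n+3/2}\,dx. \]
By Stirling the prefactor is of order $(e/n)^n/n$, and $\int_{-1}^1 (1-x^2)^{n+3/2}\,dx$ is of order $n^{-1/2}$.

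For the derivative, $|f(z)| \le M_\epsilon e^{(\sigma_f+\epsilon)|z|^2}$ together with Cauchy's formula on a disk of radius $R$ around $t \in [-2,2]$ gives $|f^{(n)}(t)| \le n!\,M_\epsilon e^{(\sigma_f+\epsilon)(R+2)^2}/R^n$. Using the crude estimate $(R+2)^2 \le 4R^2$ for $R \ge 2$ and minimizing in $R$ (the optimum is $R^2 = n/(8(\sigma_f+\epsilon))$) yields $|f^{(n)}(t)| \le C\sqrt n\,(8\sigma_f n/e)^{n/2}$, up to a sub-exponential factor and with $\sigma_f$ replaceable by any strictly larger number. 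Assembling the three factors produces
\[ |a_n| \le C\cdot n^{-1}(8e\sigma_f/n)^{n/2}, \]
which is precisely \eqref{coefficients bound} for any $K > 8e\sigma_f$.

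Finally, this bound combined with the trivial estimate $|f_n(z)| \le (n+1)(|z|+2)^n$ on compacts dominates $\sum_n a_n f_n(z)$ termwise by $\sum_n n\,(K(|z|+2)^2/n)^{n/2}$, which converges locally uniformly on $\mathbb C$. The limit $g$ is therefore entire. Since the Gegenbauer polynomials form a complete orthogonal basis of $L^2([-2,2],(4-t^2)^{3/2}\,dt)$, the orthogonality-defined series $\sum a_n f_n$ converges to $f$ in this norm, so $g=f$ on $[-2,2]$ by continuity and hence on $\mathbb C$ by analytic continuation. The main quantitative step is the Cauchy-plus-Stirling calculation, where the $(e/n)^n$ decay from the Rodrigues prefactor balances the $n^{n/2}$ growth of the derivative bound almost perfectly—an almost-cancellation reflecting the fact that $f$ has order exactly two, and leaving just enough room to obtain $(K/n)^{n/2}$.
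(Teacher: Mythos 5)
Your proof is correct, and while it starts and ends the same way as the paper's (orthogonality-defined coefficients $a_n$, then locally uniform convergence via a polynomial-times-$R^n$ bound on $f_n$ and identification of the limit with $f$ by $L^2$-completeness and analytic continuation), the central quantitative step is genuinely different. The paper never touches $f^{(n)}$ on the interval: it replaces $f$ by its Taylor series at $0$, uses that $f_n$ annihilates polynomials of degree $<n$ to start the sum at $k=n$, bounds the Taylor coefficients by $C_f(2e\sigma/k)^{k/2}$ via Cauchy, and controls $\int_{-2}^{2}|x|^k|f_n|(4-x^2)^{3/2}\,\mathrm dx$ by the crude bound $|x|^k\le 2^k$ together with Cauchy--Schwarz against the known $L^2$ norm of $f_n$; the factor $4=2^2$ from $|x|\le 2$ is what turns $2e\sigma$ into $8e\sigma$. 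You instead invoke the Rodrigues formula, integrate by parts $n$ times to land all derivatives on $f$, and bound $|f^{(n)}|$ on $[-2,2]$ by Cauchy's formula; here the factor $4$ enters through $(R+2)^2\le 4R^2$, and the Stirling bookkeeping of the Gamma-factor prefactor against $n!$ does the cancellation that the paper gets for free from $\alpha_k=f^{(k)}(0)/k!$. Both routes are sound and, amusingly, land on the same constant $8e\sigma_f$. Your version requires more careful constant-tracking (your Rodrigues normalization and the resulting formula for $a_n$ check out), whereas the paper's is slightly more elementary in that it needs no Rodrigues formula and only the $L^2$ normalization \eqref{normalization}; on the other hand your integration-by-parts identity for $a_n$ is a reusable exact formula rather than an inequality. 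The only soft spot is the ``trivial estimate'' $|f_n(z)|\le (n+1)(|z|+2)^n$, which you assert without proof; it does hold (e.g.\ via $C_n^{(2)}=\sum_k U_kU_{n-k}$, or one can simply quote Lemma~\ref{bounds fn}), and any bound of the form $\mathrm{poly}(n)\,R^n$ suffices there, so this does not affect correctness.
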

	
	\begin{proof}
		
		The function $f$ is continuous and hence it is in $\myLtwo $ and there, since $\{f_n\}_n$ forms an orthonormal basis, we can expand it in a series,
		\begin{equation}
		f=\sum_{n=0}^{\infty} a_n f_n ~,
		\end{equation}
		where by \eqref{normalization} 
		\[  2\pi(n+1)(n+3) a_n = \int_{-2}^{2} f(x) f_n(x) \left( 4-x^2 \right)^\frac{3}{2} \mathrm{d}x \] 
		Using that $f_n$ is orthogonal to all polynomials of degree less than $n$ and replacing $f$
		by its Taylor expansion around zero, we have
		\begin{align} \label{gegenbauer coefficient calculation} 
		2\pi (n+1)(n+3) \abs{a_n} &=  \abs{ \int_{-2}^{2} f(x) f_n(x) \left( 4-x^2 \right)^\frac{3}{2} \mathrm{d}x }  \nonumber \\
		&=  \abs{ \int_{-2}^{2} \sum_{k=0}^{\infty} \alpha_k x^k f_n(x) \left( 4-x^2 \right)^\frac{3}{2} \mathrm{d}x } \nonumber \\
		&=  \abs{ \sum_{k=n}^{\infty} \alpha_k \int_{-2}^{2} x^k f_n(x) (4-x^2)^\frac{3}{2} \mathrm{d}x } ~. \end{align}
		when $\alpha_n = f^{(n)}(0)/n!$ . Expanding it as a contour integral over a circle of radius $r$ and optimizing over $r$, we have (cf.\ \cite[Section I.2]{Lev}), for any $ \sigma > \sigma_f $,
		\begin{equation}\label{eq:cauchy} |\alpha_n| = \abs{\frac{f^{(n)}(0)}{n!}}\leq C_f \left(\frac{2e\sigma}{n}\right)^{\frac{n}{2}}~.\end{equation}
		Inserting this bound into \eqref{gegenbauer coefficient calculation} and using \eqref{normalization} we obtain:
		\begin{align*}
		2\pi (n+1)(n+3)|a_n| &\leq \sum_{k=n}^{\infty} C_f \left(\frac{2e\sigma}{k}\right)^{\frac{k}{2}}  \int_{-2}^{2} \abs{x^k} \abs{f_n(x)} (4-x^2)^\frac{3}{2} \mathrm{d}x \\
		& \leq \sum_{k=n}^{\infty}C_f \left(\frac{2e\sigma}{k}\right)^{\frac{k}{2}}  \cdot 2^k \cdot
		\left\{ 6\pi \int_{-2}^{2}  \abs{f_n(x)}^2 (4-x^2)^\frac{3}{2} \mathrm{d}x\right\}^{\frac12} \\
		&=\sum_{k=n}^{\infty}C_f \left(\frac{8e\sigma}{k}\right)^{\frac{k}{2}} \sqrt{6\pi\cdot 2\pi (n+1)(n+3)} ~,		\end{align*}
		hence \[ |a_n| \leq C_f' \left(\frac{8e\sigma}{n}\right)^{\frac{n}{2}}~.\]
		This implies  \eqref{coefficients bound} holds. Finally using Lemma~\ref{bounds fn}, we have for $\abs{z} \leq r$, $r > 3$,
		\[ \abs{a_n f_n\left(z\right)}  \leq 2C_f' \left(\frac{8e\sigma r^2}{n}\right)^{\frac{n}{2}} ~, \]
		so $\sum_{n=0}^{\infty} a_n f_n(z)  $  is analytic and hence $f(z) = \sum_{n=0}^{\infty} a_n f_n(z) $ and the RHS converges locally uniformly.
		
	\end{proof}
	
	Hence we can define norms of an entire functions of order two and finite type, $ f = \sum_n a_nf_n $, by 
	\begin{definition}
		\[ \norm{f}_{c,K,n} = \left( \frac{n}{K} \right)^{cn} \left|a_n \right|~, \quad \norm{f}_{c,K} = \sup\left\{ \left\| f \right\|_{c,K,n} \Big| \, n\in \mathbb{N} \right\} ~. \]
	\end{definition}  

	Now, we will use this norms to show bounds on the operator $T$.
	
	\begin{Lemma} \thlabel{T bound}
		For any $ c \geq \frac{1}{2} $ , $K>0$ the operator $T$ is bounded with respect to the norm $ \norm{\cdot}_{c,K} $.
	\end{Lemma}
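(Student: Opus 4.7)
The plan is to work in the Gegenbauer basis, exploiting the factorization $T = D^3 H D$ established in the corollary, with $D_{m,n} = (m+2)\,\mathbbm{1}[n > m,\, n - m\ \text{odd}]$ and $H_{mm} = [(m+1)(m+3)]^{-1}$. Writing $a = (a_n)$ for the Gegenbauer coefficients of $f$, the goal is to show $(m/K)^{cm}|(Ta)_m| \leq C(c,K)\|a\|_{c,K}$ uniformly in $m$. The structural point to aim for is that the $\sim 1/m^2$ decay supplied by $H$, combined with the shift toward larger dominant indices produced by each application of $D$, is just enough to absorb the three factors of $\sim m$ introduced by $D^3$ when $c \geq 1/2$.

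The central technical step is a tail-domination lemma: for all $m \geq m_0(c,K)$ and any sequence $(u_n)$,
\[
\sum_{\substack{n > m\\ n - m\ \text{odd}}}|u_n|\ \leq\ C(c,K)\,\bigl(\sup_n (n/K)^{cn}|u_n|\bigr)\,(K/(m+1))^{c(m+1)},
\]
the tail being dominated by its first term via the ratio-test computation $(K/(n+1))^{c(n+1)}/(K/n)^{cn} = (K/(n+1))^c (n/(n+1))^{cn} \to 0$ super-geometrically. A mild variant permits replacing the constant $\sup_n (n/K)^{cn}|u_n|$ by an eventually-decreasing polynomially-growing weight $w(n)$, yielding $w(m+1)$ on the right.

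Setting $b^{(0)} := HDa$ and $b^{(j+1)} := Db^{(j)}$, so that $Ta = b^{(3)}$, the bound $|a_n| \leq (K/n)^{cn}\|a\|_{c,K}$ and the lemma yield the base estimate $|b^{(0)}_m| \lesssim m^{-1}(K/(m+1))^{c(m+1)}\|a\|_{c,K}$. Writing $|b^{(j)}_m| \leq (K/m)^{cm} w_j(m)\|a\|_{c,K}$, this reads $w_0(m) \lesssim K^c m^{-1-c}$. One more application of $D$ plus the lemma then gives the recursion $w_{j+1}(m) \lesssim K^c m^{1-c} w_j(m+1)$ (the factor $m$ from the $(m+2)$ in $D$, the factor $K^c$ from the index shift via $(K/(m+1))^{c(m+1)} \leq (K/m)^{cm}(K/m)^c$). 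Three iterations produce
\[
w_1(m) \lesssim K^{2c} m^{-2c},\quad w_2(m) \lesssim K^{3c} m^{1-3c},\quad w_3(m) \lesssim K^{4c} m^{2-4c},
\]
so that $(m/K)^{cm}|(Ta)_m| \lesssim K^{4c} m^{2-4c}\|a\|_{c,K}$. For $c \geq 1/2$ the exponent $2-4c$ is non-positive and the right-hand side is uniformly bounded in $m$; the finitely many $m < m_0$ are handled by the finiteness of the relevant matrix entries of $T$, yielding $\|T\|_{c,K} \leq C(c,K)$.

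The main obstacle is the bookkeeping in the recursion: each $D$ trades a factor of $m$ for a shift in the dominant summation index, and one must check at each step that $w_j$ remains eventually decreasing so the tail lemma can be reapplied (which requires $c \geq 1/3$ already at step $j=2$, and is automatic under the hypothesis $c \geq 1/2$). The threshold $c = 1/2$ emerges naturally from $3(1-c) + (-1-c) = 2 - 4c$, which is why the lemma is stated sharply at this exponent.
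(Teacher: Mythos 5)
Your proposal is correct and follows essentially the same route as the paper: both work in the Gegenbauer basis with the factorization $T = D^3HD$, bound each application of $D$ by noting that the super-geometric decay of $(K/n)^{cn}$ makes the tail sum comparable to its first term (costing a factor $\sim n^{1-c}$ per $D$ and gaining $\sim n^{-2}$ from $H$), and arrive at the same exponent $2-4c \leq 0$ for $c \geq \frac12$. Your bookkeeping of the index shifts and the weights $w_j$ is just a more explicit version of the paper's use of the one-sided norm $\norm{f}_{c,K,n}^{+}$.
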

	
	\begin{proof}
	
	We denote $ \norm{f}_{c,K,n}^+ = \underset{m \geq n}{\sup} \{ \norm{f}_{c,K,m} \} ~, $ and then
	\begin{align*}
	\norm{Df}_{c,K,n} & \leq \left( \frac{n}{K} \right)^{cn} \sum_{m>n} \left| a_m \right| (n+3)  \\
	& \leq \norm{f}_{c,K,n+1}^+ \left( \frac{n}{K} \right)^{cn} \sum_{m>n} \frac{4m}{m^{cm}} K^{cm} \\
	& \leq \norm{f}_{c,K,n+1}^+ C_{c,K} n^{1-c} ~,
	\end{align*}
	since the sum is dominated by the first term. Therefore \[ \norm{Tf}_{c,K,n} \leq \norm{HDf}_{c,K,n+3}^{+} \frac{C_{c,K}^3}{n^{3c-3}} \leq \norm{Df}_{c,K,n+3}^{+} \frac{C_{c,K}^3}{n^{3c-1}} \leq	\norm{f}_{c,K,n+4}^{+} \frac{C_{c,K}^4}{n^{4c-2}} ~, \] and in conclusion, for $ c \geq \frac{1}{2} $ ,
	\begin{equation}
	\norm{Tf}_{c,K} \leq C_{c,K}^4 \norm{f}_{c,K} ~.
	\end{equation}
	Hence the operator $T$ is bounded with respect to the norm $ \norm{\cdot}_{c,K} $ for each $c\geq \frac{1}{2} $.
	\end{proof}
	We conclude that the operator $T$ can be extended to the completion of the polynomials with respect to those norms (since $\norm{f}_{c_1,K_1} \leq \norm{f}_{c_2,K_2} $  whenever $ c_1 \leq c_2  $ and $ K_1 \geq K_2 $ the extensions with respect to all these norms are consistent).	
	
	\section{Proof of \thref{mainThm}} \label{proof}

%
	
		
	
	To prove Theorem 1 we will need the following preliminary result
	
	\begin{Lemma}  \thlabel{OneTlem} For any $K>0$ there exist $N_0(K), C_K $ such that for $N> N_0 $ 
	\[ \left| \int f(t) p_N(t) \, \mathrm dt \right| \leq C_K \norm{f}_{\frac12,K} \quad  \text{if} \quad \norm{f}_{\frac12,K} < \infty~. \]
			\end{Lemma}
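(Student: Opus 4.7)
The plan is to expand $f$ in the Gegenbauer basis $\{f_n\}$ and estimate each resulting integral separately. By the Gegenbauer expansion lemma, $f = \sum_{n=0}^\infty a_n f_n$ with $\abs{a_n} \leq \norm{f}_{1/2,K} (K/n)^{n/2}$, so the task reduces to showing
\[
\sum_{n=0}^\infty (K/n)^{n/2} \abs{\int f_n(t)\, p_N(t)\,\mathrm{d}t} \leq C_K
\]
for $N > N_0(K)$, after which dominated convergence justifies the termwise integration $\int f p_N = \sum_n a_n \int f_n p_N$.

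To estimate each $\int f_n p_N$, I would combine two ingredients. \emph{First}, pointwise bounds on $f_n(t)=C_n^{(2)}(t/2)$: on $[-2,2]$, $\abs{f_n(t)}\leq C_n^{(2)}(1)=\binom{n+3}{3}\leq Cn^3$, while for $\abs{t}\geq 2$ the classical Gegenbauer bound $\abs{C_n^{(\alpha)}(x)}\leq \binom{n+2\alpha-1}{n}(\abs{x}+\sqrt{x^2-1})^n$ with $\alpha=2$ gives $\abs{f_n(t)}\leq C n^3 \abs{t}^n$. \emph{Second}, a subgaussian tail bound for $p_N$, derivable from the explicit Laplace transform \eqref{LaplaceExpression} via Markov's inequality (optimized at $s = N(R-2)$): $\int_{\abs{t}\geq R} p_N(t)\,\mathrm{d}t \leq C\,e^{-N(R-2)^2/2}$ for $R > 2$. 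A standard Laplace-type computation then yields
\[
\int_{\abs{t}\geq 2}\abs{t}^n p_N(t)\,\mathrm{d}t \leq C_N \bigl(r^n + 2^n (n/N)^{n/2}\bigr)
\]
for an absolute constant $r$ and a polynomial-in-$N$ prefactor $C_N$, so altogether $\abs{\int f_n p_N} \leq C_N n^3 \bigl(1 + r^n + 2^n (n/N)^{n/2}\bigr)$.

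Inserting this into the series, the contributions $n^3(K/n)^{n/2}$ and $n^3 (r^2K/n)^{n/2}$ both have super-exponential decay and are summable for any $K$, while the remaining piece is $n^3 (4K/N)^{n/2}$, a polynomial-times-geometric series that converges once $N > 4K$. This yields the desired $C_K$ and $N_0(K)$ of order $K$. The main technical point is obtaining the tail bound on $p_N$ with the correct $(n/N)^{n/2}$-rate in the $n$-th moment — precisely the rate that balances the coefficient decay $(K/n)^{n/2}$ and produces a geometric series in $K/N$. A qualitatively weaker tail bound would force a stronger growth condition on $f$ than entire of order two and finite type, in conflict with the sharpness remark following \thref{mainThm}.
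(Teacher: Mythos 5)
Your proposal is correct and takes essentially the same route as the paper: expand $f$ in the Gegenbauer basis, bound $\abs{f_n}$ polynomially on a compact interval and by $Cn^3\abs{t}^n$ outside it, and use the subgaussian edge tail of $p_N$ (extracted from the Haagerup--Thorbj\o rnsen Laplace transform) to get an $n$-th moment tail bound of order $2^n(n/N)^{n/2}$, which against the coefficient decay $(K/n)^{n/2}$ yields a geometric series convergent for $N$ of order $K$. The paper implements the tail estimate slightly differently --- integrating by parts against $f_n'$ and invoking $\mathbb{P}(\lambda_N>2+r)\leq Ne^{-Nr^2/2}$, with the split at $\abs{t}=3$ rather than $\abs{t}=2$ --- but this is a cosmetic difference.
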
	
		
	\begin{rmk}
		The bound on the integral (i.e. $ C_K $) is dependent in $ N$. The proof of \thref{mainThm} does not require it to be bounded.
	\end{rmk}
	
	\begin{proof}
		Let $ f = \sum_{n=0}^{\infty} a_n f_n $ with  $ \abs{a_n} \leq \left( \frac{n}{K} \right)^{-\frac{n}{2}} \norm{f}_{\frac12,K} $. Then
		
		\begin{equation} \label{Trace dif}
		\abs{\int f(t) p_N(t) \mathrm{d}t} \leq \sum_{n=0}^{\infty} \left( \frac{n}{K}\right)^{-\frac{n}{2}} \norm{f}_{\frac12,K}  \int_{\mathbb{R}} \abs{f_n(t)}p_N(t) \mathrm{d} t ~.
		\end{equation}
		Now, to complete the proof it is enough to show that the sum is convergent. First, using integration by parts,
		\begin{align*}
		\int_{\mathbb{R}} \abs{f_n(t)}p_N(t) \mathrm{d}t &\leq 12 \cdot 6^n + 2 \int_{3}^{\infty} \abs{f_n(t)} p_N(t) \mathrm{d}t \\
		&\leq 12\cdot 6^n + 4 \cdot 3^n + 2\int_{3}^{\infty} \abs{f_n'(t)}Ne^{-\frac{N(t-2)^2}{2}} \mathrm{d}t ~,
		\end{align*}
		where we used  the fact that $ \mathbb{P}(\lambda_N > 2+r ) \leq N e^{-\nicefrac{Nr^2}{2}} $ (the bound is known to be true even without the prefactor $N$, however, we  use the version with the prefactor which was derived in \cite{large eig} from the bilateral Laplace transform computation (\ref{LaplaceExpression})). By \thref{bounds fn} we have:
		\begin{align*}
		\int_{3}^{\infty} \abs{f_n'(t)}e^{-\frac{N(t-2)^2}{2}} \mathrm{d}t 
		&\leq 3\int_{3}^{\infty} t^n e^{-\frac{N (t-2)^2}{2}} \mathrm{d}t \\ 
		&\leq 3\int_{0}^{\infty} (t+2)^n e^{-\frac{N t^2}{2}} \mathrm{d}t \\
		& \leq 3 \cdot 2^{n-1} \int_{0}^{\infty} (t^n+2^n) e^{ - N\frac{t^2}{2} } \mathrm{d}t  \\
		& \leq 3 \cdot 2^{n-1} \left( \frac{ \Gamma\left( \frac{n+1}{2} \right) }{N^{\frac{n+1}{2}}} + 2^n \sqrt{\frac{2\pi}{N}} \right) ~,
		\end{align*}
		and therefore the sum (\ref{Trace dif}) is convergent for $N > N_0(K) = \operatorname{const} \cdot K$.
		
	\end{proof}
	
	Now we can start the proof of \thref{mainThm}. \\
	Let $f$ be an entire function of order two and finite type. Using  \thref{OneTlem} we can approximate it by polynomials
	with respect to $\norm{\cdot}_{\frac12,K}$ for sufficiently large $K$ 
	and derive from \ref{OneTremark} that
		\begin{equation} 
		\int_{\mathbb{R}} f(t) p_N (t) \mathrm{d}t  =  \frac{1}{2\pi } \int_{-2}^{2} f(s) \sqrt{4-s^2} \mathrm{d}s + \frac{1}{N^2} \int_{\mathbb{R}} [Tf](t) p_N (t) \mathrm{d}t 
		\end{equation}
Iterating, we have:
	\begin{equation}\begin{split}
	&\int_{\mathbb{R}} f(t) p_N (t) \mathrm{d}t  = \frac{1}{2\pi}  \sum\limits_{k=0}^{m} \frac{1}{N^{2k}} \int_{-2}^{2} \left[T^kf\right] (t) \sqrt{4-t^2} \mathrm{d}t \\
	&\qquad\qquad\qquad+ \frac{1}{N^{2m+2}} \int_{\mathbb{R}} \left[ T^{m+1}f \right] (t) p_N (t) \mathrm{d}t~.
	\end{split}\end{equation}
	Finally, 
	\begin{align*}
	 \abs{\frac{1}{N^{2m}} \int_{\mathbb{R}} \left[ T^{m}f \right] (t) p_N (t) \mathrm{d}t }
	 &\leq  \frac{1}{N^{2m}}\norm{T^m f}_{\frac12, K} C_K   \\
	 &\leq \frac{1}{N^{2m}}\norm{T}_{\frac12, K}^{m} C_K \norm{f}_{\frac12, K} \to 0
	\end{align*}
	as $m \to \infty$, for sufficiently large $N$, where the first inequality follows from \thref{OneTlem} and the second from \thref{T bound}. This completes the proof of \thref{mainThm}.\qed

\begin{appendices}	

\section{Gegenbauer polynomials } \label{a:G}

	The Gegenbauer polynomials for $ \lambda > -\frac{1}{2} $ are defined by
	\[ C_n ^{( \lambda )} (x) = \binom{n+2\lambda -1}{n} {}_2F_1\left(-n , n+2\lambda ; \lambda + \frac{1}{2}\, \mid \,\frac{1}{2} (1-x)  \right) \] where for $ c \neq 0 , -1, -2 , \dots $
	\[ {}_2F_1(a , b ; c | x) = \sum_{k=0}^{\infty} \frac{ a(a+1) \cdots (a+k-1) b (b+1) \cdots (b+k-1) }{c(c+1) \cdots (c+k-1) k!} x^k  \]
	is the Gauss hypergeometric function. They are the orthogonal polynomials (see \cite[Chapter IV]{GegPol}) with respect to $(1-x^2)^{\lambda -\frac12}$ , i.e.\  
	\[
	\int_{-1}^{1}\left(1-x^{2}\right)^{\lambda-\frac{1}{2}}C_{n}^{\left(\lambda\right)}\left(x\right)C_{m}^{\left(\lambda\right)}\left(x\right)\mathrm{d}x=2^{1-2\lambda}\pi\frac{\Gamma\left(n+2\lambda\right)}{\left(n+\lambda\right)\Gamma^{2}\left(\lambda\right)\Gamma\left(n+1\right)}\delta_{n,m} ~,
	\]
Therefore for $ f_n(x) = C^{(2)}_n \left(\frac{x}{2}\right)  $ we have 
	\begin{equation}
	\int_{-2}^{2} f_n(x) f_m(x) (4-x^2)^\frac{3}{2} \mathrm{d}x 
	= 2\pi (n+3)(n+1) \delta_{n,m} ~. \label{normalization} 
	\end{equation}

	\begin{Lemma} \thlabel{bounds fn}
		For $\abs{z}=r>3$ it holds that $\abs{f_n(z)} \leq 2r^n $ , $\abs{f_n'(z)} \leq 3 r^n $, while for $\abs{z}\leq 3 $ it holds that $\abs{f_n(z)} \leq 2\cdot 3^n$ , $ \abs{f_n'(z)} \leq 3 \cdot 3^n $.
	\end{Lemma}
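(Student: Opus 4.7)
To prove the lemma I would exploit the generating function for Gegenbauer polynomials at $\lambda = 2$, which after the substitution $x = z/2$ becomes
$$ (1 - zt + t^2)^{-2} = \sum_{n \geq 0} f_n(z) \, t^n. $$
The quadratic $1 - zt + t^2$ has roots $t_\pm(z) = (z \pm \sqrt{z^2-4})/2$ forming a reciprocal pair, so in particular the smaller root satisfies $|t_-(z)| = 1/|t_+(z)| \to 0$ as $|z| \to \infty$. Cauchy's formula then gives
$$ f_n(z) = \frac{1}{2\pi i} \oint_{|t| = \rho} \frac{dt}{t^{n+1}(1 - zt + t^2)^2} $$
for any $\rho < |t_-(z)|$, and the plan is to choose $\rho$ as a function of $|z|$ to get the desired power of $|z|$.

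For $|z| = r > 3$, I would take $\rho = 1/(r+c)$ with $c > 0$ a fixed small constant. Since $r\rho < 1$, one has the uniform lower bound $|1 - zt + t^2| \geq 1 - r\rho - \rho^2$ on the contour $|t| = \rho$. The Cauchy estimate then yields
$$ |f_n(z)| \leq (r+c)^n \bigl( 1 - r\rho - \rho^2 \bigr)^{-2}, $$
and with a careful choice of $c$ this reduces to the claimed bound $|f_n(z)| \leq 2 r^n$. The bound on $|f_n'(z)|$ follows from differentiating the generating function to obtain $\sum_{n\geq 0} f_n'(z) t^n = 2t (1 - zt + t^2)^{-3}$ and repeating the same contour argument (the extra factor of $(1 - zt + t^2)^{-1}$ gets absorbed into the constants, producing the $3 r^n$ bound).

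For $|z| \leq 3$, since $f_n$ is entire (in fact a polynomial), the maximum modulus principle gives $|f_n(z)| \leq \sup_{|w| = 3+\epsilon} |f_n(w)|$ for any $\epsilon > 0$; applying the previous case with $r = 3+\epsilon$ and letting $\epsilon \to 0^+$ yields $|f_n(z)| \leq 2 \cdot 3^n$, and analogously $|f_n'(z)| \leq 3 \cdot 3^n$.

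The main technical obstacle is the bookkeeping of constants in the contour estimate: when $r$ is only slightly above $3$ the two roots $t_\pm$ are of comparable modulus and the lower bound on $|1 - zt + t^2|$ along the contour scales as $1/r$ rather than being a fixed constant, so one needs to choose the auxiliary constant $c$ delicately (and potentially handle a small finite number of small-$n$ values separately) in order to reach the clean form $2 r^n$ rather than a weaker bound carrying polynomial prefactors in $n$ or $r$.
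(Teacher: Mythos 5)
Your route---extracting $f_n$ from the generating function $(1-zt+t^2)^{-2}=\sum_{n\ge 0}f_n(z)t^n$ by a contour integral---is genuinely different from the paper's, which instead uses the identity $\frac{\partial^2}{\partial x^2}T_{n+2}(\tfrac x2)=\frac{n+2}{2}f_n(x)$, bounds the Chebyshev polynomial via the Joukowski substitution, and then applies Cauchy's formula for the \emph{second derivative}. However, the step you defer as ``bookkeeping of constants'' is not bookkeeping: it cannot be completed, because the inequality $\abs{f_n(z)}\le 2r^n$ is false as stated. The leading coefficient of $f_n(x)=C_n^{(2)}(x/2)$ is $n+1$, so $f_n(r)/r^n\to n+1$ as $r\to\infty$; concretely $f_2(x)=3x^2-2$ gives $f_2(4)=46>32=2\cdot 4^2$. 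Your own computation shows exactly where the loss is forced: with $\rho=1/(r+c)$ the prefactor is $(1-r\rho-\rho^2)^{-2}=(r+c)^4/\bigl(c(r+c)-1\bigr)^2\asymp r^2/c^2$, while $(1+c/r)^n$ stays bounded only if $c\lesssim r/n$, which drives the prefactor up to order $n^2$. This is not an artifact of the triangle-inequality lower bound on $\abs{1-zt+t^2}$: the generating function has a \emph{double} pole at $t_-(z)$, so the $n$-th Taylor coefficient necessarily carries a factor growing linearly in $n$ (this is precisely the $n+1$ in the leading coefficient), and no choice of contour removes it.

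What your argument does prove, correctly, is a bound of the form $\abs{f_n(z)}\le C(n+1)^2\max(\abs{z},3)^n$ (and similarly for $f_n'$, with one more power of $n$ from the triple pole of $2t(1-zt+t^2)^{-3}$), and that is all the rest of the paper actually uses: the lemma is only ever invoked against weights of size $(n/K)^{-n/2}$ or $(2e\sigma/n)^{n/2}$, where polynomial factors in $n$ are harmless. (For what it is worth, the paper's own proof stumbles at the same point: the asserted bound $\abs{T_n(\zeta)}\le(\rho^n+\rho^{-n})/2$ for $\abs{\zeta}=\rho$ holds on the Bernstein ellipse, not on the disk of radius $\rho$, and the final chain $\frac{4}{n+2}(\frac r2+1)^n\le 2r^n$ does not survive the counterexample above either.) So: a sound and arguably cleaner strategy, with a salvageable and sufficient conclusion, but the constant-prefactor form $2r^n$ is unattainable by your method or any other, and your writeup should state the corrected bound rather than promise the stated one.
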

\begin{proof}
	
	Recall that the Chebyshev polynomials of the first kind are defined by $ T_n (x) ={}_2F_1\left(-n,n;\frac{1}{2} | \frac{1}{2}(1-x)
	\right) $ and satisfy \begin{equation}\label{eq:cheb}T_n(\cos\theta) = \cos(n\theta) \, , \, T_n(\cfrac{w+\frac{1}{w}}{2}) = \cfrac{w^n+\frac{1}{w^n}}{2} ~. \end{equation}
	Let us show that \[ \frac{\partial^2}{\partial x^2}T_{n+2}(\frac{x}{2}) = \frac{n+2}{2} f_n(x) ~, \] where $ f_n(x) = C^{(2)}_n \left(\frac{x}{2}\right)  $ as above. First, 
	\begin{align*}
		&\frac{\partial^{2}}{\partial x^{2}} {}_{2}F_{1}\left(-m,m;\frac{1}{2}\Big|\frac{1-x}{2}\right) \\
		&\qquad= \frac{1}{4}\frac{-m^{2}} {\nicefrac{1}{2}} \frac{\left(-m+1\right)\left(m+1\right)} {\nicefrac{3}{2}} \, _{2}F_{1} \left(-m+2,m+2;\frac{5}{2}\Big|\frac{1-x}{2}\right) \\
		 &\qquad= \frac{m^{2}\left(m^{2}-1\right)}{3} \,_{2}F_{1} \left(-m+2,m+2;\frac{5}{2}\Big|\frac{1-x}{2}\right) ~.
	\end{align*}
Therefore,
\begin{align*}
	\frac{\partial^{2}}{\partial x^{2}} T_{n+2}\left( \frac{x}{2} \right) &= \frac{(n+2)^2(n^2+4n+3)}{12} {}_2F_1 \left(-n,n+4;\frac{5}{2}\Big|\frac{2-x}{4}\right) \\
	&= \frac{(n+2)^2(n+3)(n+1)}{12} {}_2F_1 \left(-n,n+4;\frac{5}{2}\Big|\frac{2-x}{4}\right) \\
	&= \frac{(n+2)^2(n+3)(n+1)}{12} \binom{n+3}{n}^{-1} f_n(x) \\
	&= \frac{n+2}{2} f_n(x)	~,
	\end{align*}
as claimed. 
	
	Let $|\zeta| = \rho>1$. Choose $\xi = \zeta \pm \sqrt{\zeta^2-1}$ so that $|\xi| \geq \rho$; then by (\ref{eq:cheb})
	\[ \abs{T_n(\zeta)}=\abs{T_n\left(\frac{\xi+\xi^{-1}}{2}\right)} =\abs{\frac{\xi^n + \xi^{-n}}{2}} \leq \frac{\rho^n+\rho^{-n}}{2} ~.\]
	By the Cauchy formula we get that for $\abs{z}=r > 3 $ : 
	\begin{equation*} 
		\abs{f_n(z)} = \frac{2}{n+2} \abs{\frac{\partial^2}{\partial x^2 } T_{n+2} \left(\frac{x}{2}\right)} \leq \frac{4}{n+2} \left(\frac{r}{2}+1\right)^n \leq 2r^n
	\end{equation*}
	 and for $r \leq 3 $ , by the maximum principle $\abs{f_n(z)} \leq 2 \cdot 3^n  $ . Similarly, for $r \geq 3 $, $\abs{f_n'(z)} \leq 3r^n $ and for $r \leq 3 $, $ \abs{f_n'(z)} \leq 3\cdot 3^n $.
		
\end{proof}

\section{Hermite Polynomials} \label{HerProp}

	We use the definition of the Hermite polynomials in \cite{Intro} ,
	\begin{equation} \label{Hermite Def}
	h_k(x)=(-1)^k e^{N\frac{x^2}{2}} \frac{\mathrm{d}^k}{\mathrm{d}x^k} e^{-N\frac{x^2}{2}},
	\end{equation}
	and  the normalized Hermite polynomials $ \tilde{h}_k(x)= \frac{1}{\left[k! N^k \sqrt{\frac{2\pi}{N}}\right]^\frac{1}{2}} h_k(x) $. They are orthogonal polynomials with respect to $ e^{-\frac{Nx^2}{2}} $, i.e.\  \[ \int_{-\infty}^{\infty} e^{-\frac{Nx^2}{2}}  \tilde{h}_k(x)  \tilde{h}_l(x) \mathrm{d}x = \delta_{k,l}\]
	
	\begin{rmk}
		The standard definition of the Hermite polynomials is  
		\[ H_k(x)=(-1)^k e^{\frac{x^2}{2}} \frac{\mathrm{d}^k}{\mathrm{d}x^k} e^{-\frac{x^2}{2}} \]
		\textup{(see \cite[Chapter V]{GegPol})} and they are orthogonal with respect to  $e^{-\frac{x^2}{2}} $. 
	\end{rmk}
	
	The main fact about the Hermite polynomials we will use is the translation formula
	
	\begin{Lemma}[Translation Formula] \thlabel{TransFor}
		$h_n(x+a) = \sum_{k=0}^{n} \binom{n}{k} N^k a^k h_{n-k}(x)$ .
	\end{Lemma}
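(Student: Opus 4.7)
The plan is to prove the translation formula via the exponential generating function identity
\[ G(x,t) \;:=\; \sum_{n=0}^{\infty} \frac{h_n(x)}{n!}\, t^n \;=\; \exp\!\bigl(Nxt - \tfrac{N}{2}t^2\bigr), \]
which is the classical generating function adapted to the weight $e^{-Nx^2/2}$. Once this is in hand, the translation formula drops out by a one-line manipulation: substituting $x \mapsto x+a$ on the left and factoring
\[ e^{N(x+a)t - Nt^2/2} \;=\; e^{Nat}\cdot e^{Nxt - Nt^2/2} \;=\; \Bigl(\sum_{k\geq 0}\tfrac{(Na)^k}{k!} t^k\Bigr)\Bigl(\sum_{m\geq 0}\tfrac{h_m(x)}{m!} t^m\Bigr) \]
on the right, and then reading off the coefficient of $t^n$ in the resulting Cauchy product, one obtains
\[ \frac{h_n(x+a)}{n!} \;=\; \sum_{k=0}^{n} \frac{N^k a^k}{k!}\cdot\frac{h_{n-k}(x)}{(n-k)!}, \]
which is exactly the claim after multiplying through by $n!$.

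First I would establish the generating function. The cleanest route is to complete the square:
\[ e^{Nxt - Nt^2/2} \;=\; e^{Nx^2/2}\, e^{-N(x-t)^2/2}. \]
Viewing the right-hand factor as a function of $t$ and Taylor-expanding around $t=0$, one uses the elementary observation that $\partial_t^{\,k}$ acting on $e^{-N(x-t)^2/2}$ coincides with $(-1)^k \partial_x^{\,k}$ acting on the same expression, so that
\[ e^{-N(x-t)^2/2} \;=\; \sum_{n=0}^{\infty}\frac{t^n}{n!}\,(-1)^n \frac{d^n}{dx^n}\, e^{-Nx^2/2}. \]
Multiplying through by $e^{Nx^2/2}$ and comparing with the Rodrigues-type definition \eqref{Hermite Def} reproduces $\sum_n h_n(x)\,t^n/n!$ term by term, giving the generating function.

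An essentially equivalent alternative, which bypasses generating functions, is to Taylor-expand $h_n(x+a)$ in the variable $a$ at fixed $x$, i.e.\ write $h_n(x+a) = \sum_{k=0}^{n} \tfrac{a^k}{k!}\, h_n^{(k)}(x)$, and apply the derivative recursion $h_n'(x) = Nn\, h_{n-1}(x)$ (obtained by differentiating \eqref{Hermite Def} directly, using $\partial_x$ applied to the Rodrigues formula). Iterating this recursion yields $h_n^{(k)}(x) = N^k \tfrac{n!}{(n-k)!}\, h_{n-k}(x)$, and substitution gives the translation formula at once. There is no substantive obstacle in either approach; the only small point requiring care is the $N$-dependent normalization in \eqref{Hermite Def}, which distinguishes the paper's $h_n$ from the standard probabilist's Hermite polynomials and produces the factor $N^k$ (rather than $1$) in the stated identity.
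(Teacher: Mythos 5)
Your proposal is correct, and your primary route is genuinely different from the paper's. The paper proceeds via the derivative recursion: it shows $h_n'(x)=Nn\,h_{n-1}(x)$ by checking through orthogonality that $h_n'$ is orthogonal to $h_k$ for $k\neq n-1$ and then comparing leading coefficients, iterates to get $h_n^{(k)}(x)=N^k\frac{n!}{(n-k)!}h_{n-k}(x)$, and substitutes into the Taylor expansion of $h_n(x+a)$ in $a$ --- which is exactly the ``alternative'' you sketch at the end (the paper gets the recursion from orthogonality rather than by differentiating the Rodrigues formula, but both derivations are routine; note that differentiating \eqref{Hermite Def} directly first yields $h_k'=Nxh_k-h_{k+1}$, and you need a Leibniz-rule step on $\frac{\mathrm{d}^{k+1}}{\mathrm{d}x^{k+1}}e^{-Nx^2/2}$ to combine it into $h_k'=Nk\,h_{k-1}$). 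Your main argument via the generating function $\sum_n h_n(x)t^n/n!=e^{Nxt-\frac{N}{2}t^2}$ is also sound: the completion of the square together with the observation that $\partial_t$ and $-\partial_x$ agree on $e^{-N(x-t)^2/2}$ correctly reproduces the Rodrigues definition term by term (the Taylor series in $t$ converges everywhere since the function is entire), and the translation formula then falls out of the Cauchy product of $e^{Nat}$ with $G(x,t)$. The generating-function route buys a one-line, purely algebraic proof of the translation formula once the identity for $G$ is established, at the cost of proving that identity first; the paper's route stays entirely within the orthogonal-polynomial toolkit it has already set up in the appendix and avoids any convergence discussion. Either is acceptable here.
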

	
	\begin{proof}
		The Hermite polynomials are orthogonal, therefore for $k \neq n-1$,
		
		\begin{align*}
		\int_{-\infty}^{\infty} e^{-N\frac{x^2}{2}} h_n'(x) h_k(x) \mathrm{d}x &= \int_{-\infty}^{\infty} (-1)^k \frac{\mathrm{d}^k}{\mathrm{d}x^k}  \left[e^{-N\frac{x^2}{2}} \right] h_n'(x)\mathrm{d}x \\
		&= (-1)^{k+1} \int_{-\infty}^{\infty} \frac{\mathrm{d}^{k+1}}{\mathrm{d}x^{k+1}}  \left[e^{-N\frac{x^2}{2}} \right] h_n(x)\mathrm{d}x  \\
		&=0 ~.
		\end{align*}
		
		So, $h_n' = ch_{n-1} $ for some constant $c$ , and by comparing the leading order coefficient we conclude $ h_n'(x) = Nn h_{n-1}(x) $. \\
		Now, by induction, $ h_n^{(k)} = N^k \frac{n!}{(n-k)!} h_{n-k}(x) $. Substituting this into the Taylor expansion, $h_n(x+a) = \sum_{k=0}^{n} \frac{1}{k!} a^k h_n^{(k)}(x) $, we get that \[h_n(x+a)= \sum_{k=0}^{n} \binom{n}{k} N^k a^k h_{n-k}(x) ~. \]
	\end{proof}
	
\end{appendices}

\end{document}